\title{Absorbing time asymptotics in the oriented swap process}
\author{Alexey Bufetov \and Vadim Gorin \and Dan Romik}
\keywords{Oriented swap process, totally asymmetric simple exclusion process, interacting particle system, last passage percolation}
\colorlet{lgray}{white!85!black}
\colorlet{lred}{white!75!red}
\newtheorem{theorem}{Theorem} 
\newtheorem*{theorem*}{Theorem}
\newtheorem{lemma}[theorem]{Lemma}
\newtheorem{proposition}[theorem]{Proposition}
\newtheorem{conjecture}[theorem]{Conjecture}
\newtheorem{corollary}[theorem]{Corollary}
\theoremstyle{remark}
\newtheorem{remark}[theorem]{Remark}
\numberwithin{equation}{section} \numberwithin{theorem}{section}
\newcommand{\Z}{\mathbb Z}
\newcommand{\1}{\mathbf 1}
\newcommand{\2}{\mathbf 2}
\newcommand{\N}{\mathbf N}
\renewcommand{\H}{\mathcal H}
\newcommand{\eps}{\varepsilon}
\newcommand{\ges}{{\scriptscriptstyle \geqslant}}
\begin{document}

\maketitle

\begin{abstract}
The oriented swap process is a natural directed random walk on the symmetric group that can be interpreted as a multi-species version of the Totally Asymmetric Simple Exclusion Process (TASEP) on a finite interval. An open problem from a 2008 paper of Angel, Holroyd, and Romik asks for the limiting distribution of the absorbing time of the process. We resolve this question by proving that this random variable satisfies GOE Tracy-Widom asymptotics. Our starting point is a distributional identity relating the behavior of the oriented swap
process to last passage percolation, conjectured in a recent paper of Bisi, Cunden,
Gibbons, and Romik. The main technical tool is a shift-invariance
principle for multi-species TASEPs, obtained by exploiting recent results of Borodin,
Gorin, and Wheeler for the stochastic colored six-vertex model.    
\end{abstract}

\section{Introduction}

A \emph{sorting network} is a shortest path between the identity permutation $\1\2\dots\N$ and the reverse permutation $\N\dots \2\1$ in the Cayley graph of the symmetric group $\mathfrak S_N$ associated with the swaps $\tau(i)$, $1\le i\le N-1$, of adjacent letters at positions $i$ and $i+1$. Equivalently, a sorting network can be encoded as a sequence of ${N\choose 2}$ indices
$$
 \left(s_1,s_2,\dots, s_{N\choose 2}\right),\quad s_i\in \{1,2\dots,N-1\},
$$
such that
$$
\tau(s_1) \cdot \tau(s_2)\cdots \tau(s_{N\choose 2})=\N\dots \2\1.
$$
We are interested in the asymptotic behavior of \emph{random} sorting networks as $N\to\infty$. There are at least two natural ways to introduce randomness here. One way is to consider a \emph{uniformly} random sorting network (out of the finite set of those for fixed~$N$). The rich asymptotic behavior of that model has been discussed in great detail in \cite{AHRV,AH,AGH,Roz,GR,ADHV,DVi,D}. Another natural way to introduce randomness was suggested in \cite{AHR} under the name ``oriented swap process''; it has a natural interpretation as an interacting particle system equivalent to a multi-species version of the Totally Asymmetric Simple Exclusion Process (TASEP).  We follow this last way and we now describe it.

In addition to the sequence of swap indices $\{s_i\}$, we consider a growing sequence of random numbers
$$
0<t_1<t_2<\dots< t_{N\choose 2}.
$$
We interpret $t_i$ as the time when the swap $\tau(s_i)$ happens: shortly before the time $t_i$ we observe the permutation $\tau(s_1) \cdot \tau(s_2)\cdots \tau(s_{i-1})$, and at time $t_i$ the next swap $\tau(s_i)$ is appended to the product. This results in a permutation-valued, continuous-time, process $(\sigma_t)_{t\ge 0}$, which can be interpreted as the evolution of a system of particles with labels (or colors) $1,\ldots,N$ interacting on the discrete interval $[N]=\{1,\ldots,N\}$, where $\sigma_t(k)$ is the label of the particle in position $k$ at time $t$. The initial condition $\sigma_0$ is the identity permutation $\1\2\dots\N$.

The random pair of sequences $\{s_i\}$ (swap positions) and $\{t_i\}$ (swap times) are generated inductively as follows:  let $\Pi_k$, $k=1,\ldots,N-1$ denote $N-1$ independent exponential clocks (rate $1$ Poisson point processes), and let $i=1$ and $t_0=0$. When the clock $\Pi_k$ is the first among the clocks for which $\sigma_{t_{i-1}}(k)<\sigma_{t_{i-1}}(k+1)$ to ring at some time $t>t_{i-1}$, we set $t_i$ equal to $t$, set $s_i = k$, and increase $i$ by 1.

The particle system interpretation of this definition is: whenever one of the Poisson clocks $\Pi_k$ rings, check the current labels of the particles at positions $k$ and $k+1$. If the one at $k+1$ has smaller label, then nothing happens. Otherwise, swap the labels of the particles at positions $k$ and $k+1$.
Clearly, after an almost surely finite time we will make all possible swaps and arrive at the reverse permutation $\N\dots \2\1$.

The authors of \cite{AHR} proved many results about the oriented swap process and its asymptotic behavior as the size $N$ of the system goes to infinity. Among the quantities they considered were certain random times at which different aspects of the process terminate. Specifically, define the $(N-1)$--dimensional vector $\mathbf{U}_N=(U_N(1),\dots,U_N(N-1))$, where for each $1\le k\le N_1$, $U_N(k)$ is the last time $t_i$ at which the swap $s_i=k$ happens. We refer to this random variable as the \emph{last swap time} associated with positions $k,k+1$; see Figure~\ref{Fig_sort}.

 \begin{figure}[t]
\begin{center}
{\scalebox{0.8}{\includegraphics{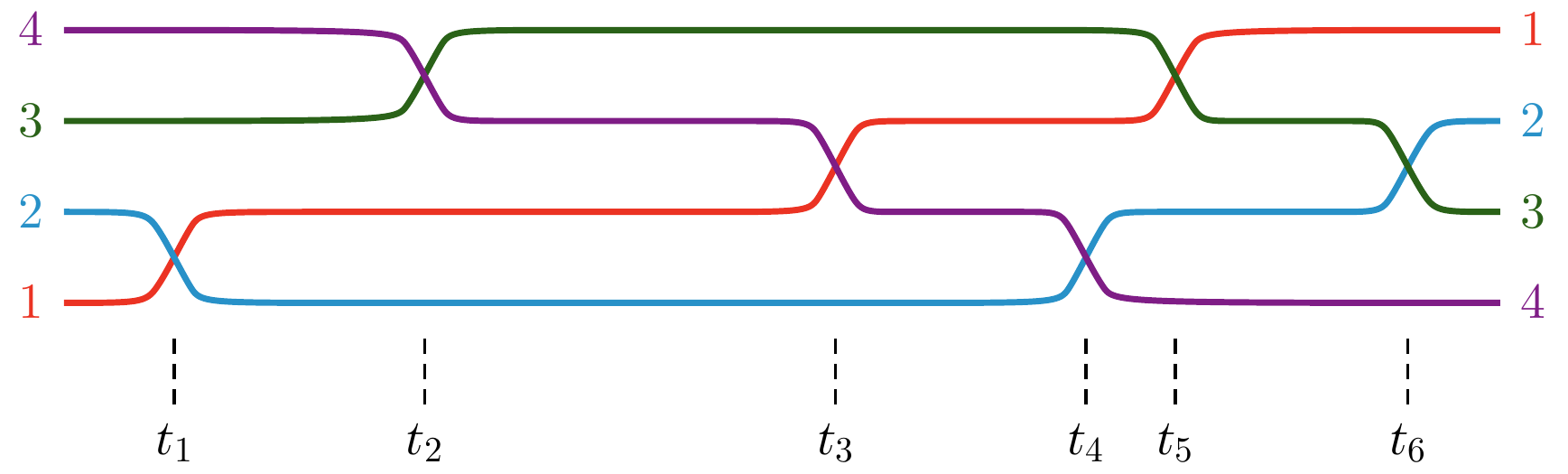}}}
 \caption{One possible sorting network or evolution of the particles in the oriented swap process. Here $N=4$, and $\mathbf{U}_4$ is the vector $(t_4,t_6,t_5)$.  \label{Fig_sort}}
\end{center}
\end{figure}

\begin{theorem}[\cite{AHR}] \label{Theorem_AHR}
Let a sequence $k=k(N)$, $N=1,2,\dots$, be given such that $\eps<k/N<1-\eps$ for some fixed $\eps>0$ and all sufficiently large $N$.
Denote $\gamma_y=1+2\sqrt{y(1-y)}$, and let $F_2$ denote the $\beta=2$ Tracy-Widom distribution.
Then we have the convergence in distribution
\begin{equation}
\label{eq_last_swap_limit}
\frac{U_N(k)-N \gamma_{k/N}}{ N^{1/3} (\gamma_{k/N})^{2/3} \left(\tfrac{k}{N}(1-\tfrac{k}{N})\right)^{-1/6}}\xrightarrow[N\to\infty]{d} F_2.
\end{equation}
\end{theorem}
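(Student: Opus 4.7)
My plan is to reduce the theorem to Johansson's Tracy--Widom asymptotics for exponential last-passage percolation via a single-species TASEP projection. The key observation is that for each threshold $m\in\{1,\dots,N-1\}$ the indicator process $\eta_t^{(m)}(i):=\mathbf{1}[\sigma_t(i)\le m]$ evolves as a standard rightward TASEP on $\{1,\dots,N\}$ driven by the very same Poisson clocks $\{\Pi_j\}$: whenever $\Pi_j$ rings and the projection sees a $1$ at $j$ and a $0$ at $j+1$, the $1$ jumps to $j+1$ (exactly matching a swap in the full process that exchanges a label $\le m$ with a label $>m$); in all other cases $\eta^{(m)}$ is unaffected. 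I take the threshold $m=N-k$, so the projected TASEP has $N-k$ particles starting at $\{1,\dots,N-k\}$ and is absorbed at $\{k+1,\dots,N\}$; each particle performs exactly $k$ rightward jumps, and in particular the leftmost particle's terminal ($k$-th) jump goes from position $k$ to $k+1$.

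The central step is the identification $U_N(k)=T$, where $T$ denotes the absorbing time of this projected TASEP. On the one hand, the leftmost particle's last jump is a swap at position $k$ in the oriented swap process, so $U_N(k)\ge T$. On the other hand, for every $t>T$ the labels $\le N-k$ lie entirely in $\{k+1,\dots,N\}$ and the labels $>N-k$ in $\{1,\dots,k\}$, so that $\sigma_t(k)>N-k\ge\sigma_t(k+1)$ and the swap condition at position $k$ can never again be satisfied, yielding $U_N(k)\le T$. It is essential here that the two label blocks $\{1,\dots,N-k\}$ and $\{N-k+1,\dots,N\}$ are \emph{intervals}: this is what prevents a label from migrating across position $k$ via swaps at neighbouring positions and reviving the forbidden order.

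The law of $T$ is now classical. Because a TASEP particle's motion depends only on its right neighbour, and because by the moment any particle performs its terminal ($k$-th) jump its right neighbour has already done so, a short induction from the rightmost particle inwards shows that the $k$-th jump times of all $N-k$ particles agree with those in the step-initial-condition TASEP on $\mathbb{Z}$ (the finite right boundary at $N$ does not interfere). The Johansson--Rost correspondence therefore gives $T\stackrel{d}{=}G(N-k,k)$, where $G(n,m)$ is the last-passage time on an $n\times m$ grid of i.i.d.\ $\mathrm{Exp}(1)$ weights. Applying Johansson's Tracy--Widom limit
\[
\frac{G(n,m)-(\sqrt{n}+\sqrt{m})^2}{(\sqrt{n}+\sqrt{m})^{4/3}(nm)^{-1/6}}\xrightarrow[n,m\to\infty]{d}F_2
\]
with $n=N-k$, $m=k$, and using the elementary identities $(\sqrt{N-k}+\sqrt{k})^2=N\gamma_{k/N}$ and $(\sqrt{N-k}+\sqrt{k})^{4/3}((N-k)k)^{-1/6}=N^{1/3}\gamma_{k/N}^{2/3}\bigl(\tfrac{k}{N}(1-\tfrac{k}{N})\bigr)^{-1/6}$, one obtains \eqref{eq_last_swap_limit} exactly.

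The main conceptual obstacle is the identification $U_N(k)=T$, and specifically the monotonicity argument preserving $\sigma_t(k)>\sigma_t(k+1)$ for all $t>T$. Once this identification is in hand, the remaining steps are standard applications of the Johansson/Rost/KPZ machinery and present no difficulty; in particular, the sophisticated shift-invariance and coloured six-vertex techniques highlighted in the introduction, which will be needed to handle the full joint law of $\mathbf{U}_N$, are not required for this marginal statement.
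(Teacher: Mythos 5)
Your proposal is correct, but note that the paper does not prove Theorem \ref{Theorem_AHR} at all --- it is imported from \cite{AHR} as a known result. Your argument (color projection to a single-species TASEP with threshold $N-k$, identification of $U_N(k)$ with the absorbing time of that finite TASEP, coupling with the step-initial TASEP on $\Z$, and the Rost--Johansson correspondence giving $G(N-k,k)$ with the correct $F_2$ normalization) is essentially a faithful reconstruction of the original proof in \cite{AHR}, whose key ingredients --- the color projection and the finite/infinite coupling via cut-off and push-back operators --- are the same ones recalled in Section \ref{Section_coupling} of this paper.
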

We recall that the distribution $F_2$ is the universal scaling limit for the largest eigenvalues of random complex Hermitian matrices of growing sizes.

We proceed further by considering the $k$th \emph{particle finishing time} $Z_N(k)$, which is the last time at which the particle with label $k$ moved. It can be related to last swap times through
$$ Z_N(k) = \max\bigl(U_N(k-1),\, U_N(k)\bigr), \quad 1\le k\le N,  $$
with the convention that $U_N(0)=U_N(N) = 0$.
\cite[Theorem 1.6]{AHR} shows that \eqref{eq_last_swap_limit} implies exactly the same limiting behavior for $Z_N(k)$.

\medskip
Among the times $U_n(k)$ and $Z_n(k)$, perhaps the most important one is the \emph{absorbing time}:
$$
T_N^{\mathrm{OSP}}=\max_{1\le k\le N-1} U_N(k)
=\max_{1\le k\le N} Z_N(k)
=t_{N\choose 2},
$$
which is the time at which the very last swap in the oriented swap process occurs and we reach the reverse permutation $\N\dots\2\1$. Theorem \ref{Theorem_AHR} implies that $T_N^{\mathrm{OSP}} \approx 2 N$ as $N\to\infty$ (the maximum is attained for $k\approx N/2$, since $y\mapsto \gamma_y$ takes its maximum value of $2$ at $y=1/2$). However, the authors of \cite{AHR} could not identify the size of the fluctuations of $T_N^{\mathrm{OSP}}$ around $2N$ or their distributional limit; they stated this as an open problem in \cite[Section 8]{AHR}. The problem is also mentioned as a ``five coffee cup'' exercise in \cite[Exercise~5.22(e), p.~331]{romik-lis}.

The following theorem settles this problem, and is our main result.

\begin{theorem} Let $F_1$  be the $\beta=1$ Tracy-Widom distribution. We have
\label{Theorem_main}
\begin{equation}
\label{eq_abs_limit}
 \frac{T_N^{\mathrm{OSP}}-2N}{2^{1/3}\, N^{1/3} }\xrightarrow[N\to\infty]{d} F_1.
\end{equation}
\end{theorem}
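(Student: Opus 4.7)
The plan combines three ingredients: (i) the parabolic decay of the deterministic mean $N\gamma_{k/N}$ near its maximizer $k = N/2$, which localizes the maximum to a window of width $N^{2/3}$; (ii) the Bisi--Cunden--Gibbons--Romik (BCGR) distributional identity relating each $U_N(k)$ to a last passage time in a last passage percolation (LPP) model; and (iii) a shift-invariance principle for multi-species TASEPs, derived from Borodin--Gorin--Wheeler, used to upgrade the one-point LPP asymptotics to joint asymptotics for the family $\{U_N(k)\}_k$.

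The first step is the localization. Expanding $\gamma_{1/2+x} = 2 - 2x^2 + O(x^4)$ gives $N\gamma_{k/N} = 2N - 2(k-N/2)^2/N + O((k-N/2)^4 / N^3)$. Hence, for $T_N^{\mathrm{OSP}} - 2N$ at scale $N^{1/3}$ only indices of the form $k = N/2 + a N^{2/3}$ with $a \in \mathbb R$ contribute, since for $|a|$ large the deterministic cost $2a^2 N^{1/3}$ dominates the $N^{1/3}$-scale fluctuations of $U_N(k)$ guaranteed by Theorem~\ref{Theorem_AHR}. This reduces the problem to understanding the joint distribution of $\{U_N(k)\}$ in this critical window.

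The central step is to access this joint distribution. The BCGR identity expresses $U_N(k) \stackrel{d}{=} L(\alpha(k), \beta(k))$ in terms of last passage times $L$ in an exponential LPP model; this alone recovers Theorem~\ref{Theorem_AHR}, but the $U_N(k)$ are strongly coupled through the common Poisson clocks, and the marginal identity does not yield the joint law needed for the maximum. At this point I would apply the shift-invariance principle for multi-species TASEPs: it asserts that joint distributions of carefully chosen process observables are preserved under shifts of observation points along certain anti-diagonal directions, reflecting the symmetries of the stochastic colored six-vertex model of Borodin--Gorin--Wheeler. Iterated application should identify the joint law of $\{U_N(N/2 + aN^{2/3})\}_a$ with the joint law of a family of LPP last passage times $L(x(a), y(a))$ sampled along an anti-diagonal. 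Combined with the standard LPP-to-Airy-process convergence, this should yield
\[
\frac{U_N(N/2 + aN^{2/3}) - 2N}{c_1 N^{1/3}} \xrightarrow[N\to\infty]{d} \mathcal A_2(c_2 a) - c_3 a^2,
\]
jointly for $a$ in compact sets, for explicit constants $c_1, c_2, c_3 > 0$ arising from the BCGR identity and the Taylor expansion of $\gamma_y$.

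Taking the maximum over $a$ on both sides and invoking the Johansson identity $\max_{a \in \mathbb R}\bigl(\mathcal A_2(a) - a^2\bigr) \stackrel{d}{=} 2^{-2/3}\chi_{\mathrm{GOE}}$, where $\chi_{\mathrm{GOE}}$ has distribution $F_1$, gives \eqref{eq_abs_limit} with the claimed $2^{1/3}$ normalization. I expect the principal obstacle to be the joint identity via shift-invariance: the shift-invariance theorem in the multi-species setting is technically delicate, and one must formulate the precise OSP observable that encodes $U_N(k)$, identify its image under the shift, and verify that the resulting object is indeed an LPP last passage time sampled along the desired anti-diagonal. A secondary obstacle is the tightness argument needed to exchange the maximum over $k$ with the limit and to rule out contributions from $k$ outside the critical window; this should follow from standard one-point upper- and lower-tail bounds for exponential LPP combined with the parabolic decay of the mean.
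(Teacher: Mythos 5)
Your high-level strategy (use shift-invariance to turn the OSP observables into anti-diagonal last passage times, then apply known LPP asymptotics) is the paper's strategy, but there is a genuine gap at the step you yourself flag as the principal obstacle, and it is exactly where the real work lies. The shift-invariance that can actually be extracted from the colored six-vertex model degenerates, in the TASEP limit, only to an identity for vectors of height functions evaluated at a \emph{single common time} $t$ (Theorem \ref{Theorem_shift}; see the Remark following it). Your plan asks for the joint law of the random times $\{U_N(k)\}$ in the critical window; that is an unequal-time statement about the colored process and is essentially (a restricted form of) Conjecture \ref{conj-eq-dist}, which this tool cannot reach. What rescues the argument --- and what your proposal does not articulate --- is that the distribution of the maximum requires only the equal-time events: $\mathrm{Prob}(T_N^{\mathrm{OSP}}\le t)=\mathrm{Prob}(U_N(k)\le t \ \forall k)$, and each event $\{U_N(k)\le t\}$ can be rewritten as a statement about the colored height function at the single time $t$, namely $\hat h_{\le k\to\ge N+1-k}(t)=k$. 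Moreover, shift-invariance holds for the colored TASEP on $\Z$, while the oriented swap process is the colored TASEP on the finite interval $[N]$; one needs the cut-off/push-back coupling of \cite{AHR} to show that the finite-system event coincides almost surely with the event $\{h_{\le k\to\ge N+1-k}(t)\ge k \ \forall k\}$ for the infinite system before shift-invariance can be applied at all. Neither of these two reductions appears in your proposal, and without them the ``apply shift-invariance'' step does not get off the ground.

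A secondary comment: once the exact finite-$N$ identity $T_N^{\mathrm{OSP}}\stackrel{d}{=}\max_{k}L(k,N-k)$ is established, your localization-plus-Airy-process-plus-Johansson-identity route to the asymptotics is unnecessary --- the GOE Tracy--Widom limit for the maximum of exponential LPP times along the anti-diagonal is precisely \cite[Theorem 1.1]{BZ}, which the paper simply invokes. Your route is viable in principle (it is close to how such results are proved), but as written the localization leans on Theorem \ref{Theorem_AHR}, which is only a distributional limit; ruling out indices outside the $N^{2/3}$ window requires uniform tail bounds, which you defer to ``standard'' LPP estimates rather than supply.
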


We recall that the distribution $F_1$ is the universal scaling limit for the largest eigenvalues of random real symmetric matrices of growing sizes.

Our proof of Theorem \ref{Theorem_main} belongs to a recent circle of ideas (see \cite{BB, BGW, BCGR,D2,Ga}) on the hidden symmetries in models of integrable probability and their universal limits. In this text we demonstrate how these ideas can be efficiently used to answer asymptotic questions about complicated stochastic systems.

A starting point for the current work is a distributional identity conjectured recently in \cite{BCGR}, which relates the random vector $\mathbf{U}_N$ to a certain random statistic defined in terms of the last passage percolation model with exponential weights. Specifically one defines a random vector $\mathbf{V}_N=(V_N(1),\dots,V_N(N-1))$ of last passage times in an oriented percolation model (the definition is given in \eqref{eq-def-vn} below), which turns out to be related to $\mathbf{U}_N$.

\begin{conjecture}[\cite{BCGR}]
\label{conj-eq-dist}
We have the equality in distribution of random vectors
\begin{equation} \label{eq-dist-rand-vects}
\mathbf{U}_N \stackrel{d}{=} \mathbf{V}_N \qquad (N\ge 2).
\end{equation}
\end{conjecture}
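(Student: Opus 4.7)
The plan is to realize both $\mathbf{U}_N$ and $\mathbf{V}_N$ as vectors of observables in a common colored interacting particle system, and then to show they have the same joint distribution by invoking a shift-invariance principle.

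First I would view the oriented swap process as a specialization of the stochastic colored six-vertex model or, equivalently, as the multi-species TASEP on $[N]$ started from the fully colored step initial condition $\sigma_0=\1\2\cdots\N$. In this framework each particle traces a colored lattice path, and the last swap time $U_N(k)$ can be rewritten as the last time at which the colored height function across the edge $(k,k+1)$ jumps. The vector $\mathbf{V}_N$ of \cite{BCGR}, via the classical Rost--Johansson correspondence between step-TASEP and exponential last passage percolation, is likewise the vector of certain height-function observables of a \emph{colorless} TASEP, and hence of a degenerate six-vertex configuration.

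Second I would apply the shift-invariance theorem of Borodin, Gorin, and Wheeler \cite{BGW}, in the multi-species TASEP form (cf.\ \cite{BB,D2,Ga}). This theorem allows one to slide the entry points of individual colored lines while preserving the joint law of the colored heights at a prescribed set of observation points, provided a ``skew order'' constraint between colors and observation points is maintained. I would construct a sequence of such shifts that converts the configuration encoding $\mathbf{U}_N$ into the configuration encoding $\mathbf{V}_N$; combined with the identifications of the first step, this yields \eqref{eq-dist-rand-vects}.

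The main obstacle is the second step: exhibiting an explicit sequence of shifts that handles all coordinates $k=1,\dots,N-1$ simultaneously while satisfying the BGW hypotheses, and verifying that the resulting observables are in bijection with the last passage times defining $\mathbf{V}_N$. A secondary difficulty is that each $U_N(k)$ a priori depends nonlocally on the entire colored history, so one must first rewrite it as a one-point event for the colored height function, to which shift-invariance can be applied coordinate by coordinate. Once this combinatorial dictionary is in place, the full joint distribution (and not merely marginals) matches automatically, because the BGW theorem applies jointly to vectors of observables.
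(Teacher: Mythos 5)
There is a genuine gap, and it is precisely the one that forces the paper to stop short of proving Conjecture~\ref{conj-eq-dist}: the paper itself only establishes the weaker identity \eqref{eq-dist-rand-vars} for the maxima $T_N^{\mathrm{OSP}} \stackrel{d}{=} T_N^{\mathrm{LPP}}$, and explicitly states that the full conjecture is \emph{not} proved. Your first step (encoding $U_N(k)$ as a height-function observable of the colored TASEP and $\mathbf{V}_N$ as height-function observables of a colorless TASEP/LPP) is sound and matches the paper's setup. The problem is in your second step. The joint law of the vector $\mathbf{U}_N$ is determined by probabilities of the form $\mathrm{Prob}(U_N(1)\le t_1,\dots,U_N(N-1)\le t_{N-1})$ with \emph{distinct} times $t_1,\dots,t_{N-1}$, which translate into joint events for colored height functions evaluated at $N-1$ different values of the time parameter. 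The shift-invariance that is actually available for the colored TASEP (Theorem~\ref{Theorem_shift}, obtained by degenerating \cite[Theorem 1.2]{BGW}) only applies when all observation points lie on a single time slice: the ordering hypotheses of the six-vertex shift-invariance theorem can only be satisfied in the TASEP limit \eqref{eq_6v_to_TASEP} by making all times equal. So the "sequence of shifts satisfying the BGW hypotheses" that you defer to as the main obstacle does not exist with the known form of the invariance; this is not a matter of finding the right combinatorial ordering but a structural limitation of the input theorem.

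This is exactly why the argument in the paper is restricted to the single random variable $T_N^{\mathrm{OSP}}=\max_k U_N(k)$: its distribution function $\mathrm{Prob}(T_N^{\mathrm{OSP}}\le t)$ requires only the joint law of the height functions at one common time $t$ (see \eqref{eq:tauExit}), to which Corollary~\ref{cor:shift} applies. If you intend to prove the full conjecture along these lines, you would first need a genuinely new, multi-time extension of the shift-invariance principle for the colored TASEP, which is an open strengthening flagged in the remark following Theorem~\ref{Theorem_shift}, not a corollary of \cite{BGW}. Absent that, your proposal proves at most the same weakened statement as the paper.
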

Let us emphasize that although the interplay between particle systems and passage times in percolation is somewhat well-known --- in particular the equality in distribution of one-dimensional marginals $U_N(k) \stackrel{d}{=} V_N(k)$ for any $1\le k\le N-1$ follows easily from standard facts --- Conjecture \ref{conj-eq-dist} goes much further and does not seem to follow in a straightforward way from any known bijections. In addition to formulating the conjecture, the authors of \cite{BCGR} gave a computer-assisted verification of the distributional identity \eqref{eq-dist-rand-vects} for the initial values $2\le N\le 6$, which provides good evidence of its validity for general values of $N$.

Simultaneously, \cite{BCGR} observed that Conjecture \ref{conj-eq-dist} can be combined with known asymptotic results to yield \eqref{eq_abs_limit}.
We \emph{do not prove} Conjecture \ref{conj-eq-dist} in this text, as the generality of our present method seems to go in a slightly different direction.

Instead, we consider the maximal coordinate of the vector $\mathbf{V}_N$:
$$
T_N^{\mathrm{LPP}} = \max_{1\le k\le N-1} V_N(k).
$$
Below we represent $T_N^{\mathrm{LPP}}$ in terms of a multi-colored version of the TASEP, use the ideas of \cite{AHR}, which relate the TASEP dynamics on finite and infinite lattices, and add to them a recently discovered shift-invariance phenomenon \cite{BGW}
for the colored six-vertex model (which can be degenerated into multi-species/colored TASEP and thereby related to the oriented swap process). The ultimate result is the following equality in distribution of random variables for any $N\ge2$:
\begin{equation} \label{eq-dist-rand-vars}
T_N^{\mathrm{OSP}} \stackrel{d}{=} T_N^{\mathrm{LPP}}.
\end{equation}

The identity \eqref{eq-dist-rand-vars}, which can be thought of as a weakened version of Conjecture~\ref{conj-eq-dist},  allows us to use the known asymptotic results for $T_N^{\mathrm{LPP}}$ to deduce Theorem \ref{Theorem_main}.

\subsection*{Acknowledgements}
The authors thank the Institute for Pure and Applied Mathematics (IPAM) at UCLA for its hospitality during their visit there in February 2020, where some of the ideas contained in the current work were discussed. We also thank Leonid Petrov for helpful discussions.

A.B.\ was partially supported by the Deutsche Forschungsgemeinschaft (DFG, German Research Foundation) under
Germany’s Excellence Strategy - GZ 2047/1, Projekt ID 390685813. V.G.\ was partially supported by NSF grants DMS-1664619, DMS-1855458, by the NEC Corporation Fund for Research in Computers and Communications, and by the Office of the Vice Chancellor for Research and Graduate Education at the University of Wisconsin--Madison with funding from the Wisconsin Alumni Research Foundation.
D.R.\ was supported by the National Science Foundation grant No. DMS-1800725.

\section{Shift invariance}

\label{sec_shift_invariance}

An important technical ingredient of our proof is the shift-invariance for the colored (or multi-species) TASEP, which we now describe.

We deal with the colored TASEP on $\mathbb Z$. By definition, this is a time-dependent assignment $\zeta_t$ of integer labels (or colors) to points of $\mathbb Z$. At time $0$ we have $\zeta_t(i)=i$, $i\in\mathbb Z$. Further, each edge $(k,k+1)$ has an independent exponential clock (rate $1$ Poisson process) attached to it. Whenever the clock rings at time $t$, we check whether $\zeta_{t-}(k+1)<\zeta_{t-}(k)$. If so, then nothing happens. Otherwise, we swap the colors at $k$ and $k+1$ by setting $\zeta_{t+}(k+1):=\zeta_{t-}(k)$ and $\zeta_{t+}(k):=\zeta_{t-}(k+1)$. Then the clock is restarted and we proceed further. We refer to \cite[Section 3]{AHR} for the description of how the process $\zeta_t$, $t\ge 0$ can be constructed using the \emph{graphical representation.}

Let us remark that if we fix some $k$ and identify the colors $\le k$ calling them ``particles'' and identify the colors $>k$ calling them ``holes'' by setting $\nu_t^k(x)=1_{\zeta_t(x)\le k}$, then $\nu_t^k$ becomes the usual TASEP with particles jumping to the right at rate $1$. The initial configuration $\nu_0^k(x)=1_{x\le k}$ is then known as the \emph{step} initial condition. In this way, the colored TASEP becomes a coupling of a countable system of ordinary TASEPs, each one started from a (shifted) step initial condition.

We study $\zeta_t$ though its \emph{height functions}, which are a collection of random variables parameterized by $A,B\in\mathbb Z$. We define
\begin{equation}
\label{eq_A_heights}
h_{\le A \to \ge B} (t) :=  \# \left\{ x\in\Z \,:\, x \ge B \text{ and } \zeta_t (x) \le A \right\}, \qquad A,B \in \Z.
\end{equation}
In words, $h_{\le A\to \ge B}$ counts the number of colors $\le A$ at positions $\ge B$ at time $t$. Note that $h_{\le A\to \ge B}$ can take arbitrary large (but almost surely finite) values.

Let us also introduce another set of height functions for the convenience of matching the notation of \cite{BGW}.
$$
 \H^{\ges i}_{\text{TASEP}}(t,y):= \# \left\{ x\in\Z \,:\, x<y\text{ and } \zeta_t(x)\ge i\right\}, \quad t\ge 0,\, y\in \Z+\tfrac12.
$$

\begin{lemma} We have an almost sure identity
\begin{equation}
\label{eq_height_modification}
h_{\le A \to \ge B} (t)= \H^{\ges A+1}_{\textnormal{TASEP}}(t, B-1/2)+ (A-B+1).
\end{equation}
\end{lemma}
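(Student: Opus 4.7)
The plan is to reduce the claimed identity to a simple conservation law for an ordinary single-species TASEP obtained by projecting colors. Fix $A$ and set $\nu_t(x) := \1_{\zeta_t(x)\le A}$; as observed in the text, $(\nu_t)$ evolves as a standard TASEP with step initial condition $\nu_0(x) = \1_{x\le A}$. Unpacking the definitions, both sides of \eqref{eq_height_modification} translate directly into sums over $\nu_t$, namely
\begin{equation*}
h_{\le A \to \ge B}(t) = \sum_{x \ge B} \nu_t(x), \qquad \H^{\ges A+1}_{\textnormal{TASEP}}(t, B-\tfrac12) = \sum_{x \le B-1}\bigl(1-\nu_t(x)\bigr),
\end{equation*}
so that \eqref{eq_height_modification} is equivalent to the assertion that
\begin{equation*}
Q(t) := \sum_{x \ge B} \nu_t(x) \;-\; \sum_{x \le B-1}\bigl(1-\nu_t(x)\bigr) \;=\; A-B+1 \qquad \text{for all } t\ge 0.
\end{equation*}

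First I would verify that both sums defining $Q(t)$ are almost surely finite for each $t$. Starting from the step profile, particles only move to the right while holes only move to the left, and a standard TASEP argument (bounding the number of rings of Poisson clocks on a bounded time interval that can transport particles across the fixed bond $(B-1,B)$) shows that only finitely many particles ever enter $[B,\infty)$ and only finitely many holes ever enter $(-\infty,B-1]$ up to time $t$.

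Next I would show that $Q(t)$ is invariant under TASEP swaps. A firing of the clock at an edge $(x,x+1)$ with both endpoints strictly on one side of the cut at $B-\tfrac12$ only permutes two summands inside one of the two sums and so leaves $Q$ unchanged. For the single remaining edge $(B-1,B)$, a swap there occurs precisely when $\nu(B-1)=1$ and $\nu(B)=0$, after which both the ``particles on the right'' sum and the ``holes on the left'' sum each increase by exactly one, so $Q$ is again unchanged. Hence $Q(t)\equiv Q(0)$.

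Finally, from $\nu_0(x)=\1_{x\le A}$ we compute $Q(0)$ directly: the first sum equals $\#\{B\le x\le A\}=\max(A-B+1,0)$ and the second equals $\#\{A<x\le B-1\}=\max(B-1-A,0)$. A short case split on whether $A\ge B-1$ or $A\le B-2$ shows that the difference is exactly $A-B+1$ in both cases, completing the proof. The only (minor) obstacle in this argument is the finiteness check; the rest is standard conservation-law bookkeeping mirroring the classical identification of TASEP height functions with particle/hole currents through a fixed bond.
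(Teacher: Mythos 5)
Your proof is correct and is essentially the same argument as the paper's: the paper checks the identity at $t=0$ and observes that both sides jump by $1$ exactly at swaps across the bond $(B-1,B)$ interchanging a color $\le A$ with a color $\ge A+1$, which is precisely your statement that the difference $Q(t)$ is conserved. Your added finiteness check and the explicit case split at $t=0$ are fine but do not change the substance.
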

\begin{proof}
 At time $0$ with the notation $(u)_+=\max(u,0)$, we have
 $$
  h_{\le A \to \ge B}(0) = (A-B+1)_{+},\qquad \H_{\text{TASEP}}^{\ges A+1}(0, B-1/2)=(B-A-1)_+.
 $$
 Hence, \eqref{eq_height_modification} holds at $t=0$.
 Next, note that both $h_{\le A \to \ge B}(t)$ and $\H^{\ges A+1}_{\text{TASEP}}(t, B-1/2)+ (A-B+1)$ are monotone functions of $t\ge 0$. They both increase by $1$ whenever at time $t$ we have a swap, interchanging a color $\le A$ at $B-1$ with a color $\ge (A+1)$ at $B$. We conclude that \eqref{eq_height_modification} holds at all times.
\end{proof}

We can now state the shift-invariance result for the colored TASEP.

\begin{theorem} \label{Theorem_shift}
  Choose an index $\iota\ge 1$, color cutoff levels $k_1\dots,k_n\in \mathbb Z$, a time $t\ge 0$ and a collection of observation points  $y_1,\dots, y_n\in\mathbb Z+\tfrac12$. Set
    $$
    k_j'=\begin{cases} k_j,& j\ne \iota,\\ k_\iota+1, & j=\iota,\end{cases} \qquad \qquad  y'_j=\begin{cases}y_j, & j\ne \iota, \\  y_\iota + 1, & j=\iota. \end{cases}
    $$
    Assume that
    $$
    k_1\le k_2\le \dots\le k_n, \qquad  k'_1\le k'_2\le \dots\le k'_n,
    $$
   $$
     y_1\ge y_2\ge \dots\ge y_n, \qquad y'_1\ge y'_2\ge \dots\ge y'_n,
    $$
    Then the distribution of the vector of height functions
\begin{equation} \label{tasep-shiftinv-vect1}
    \bigl(\H_{\rm{TASEP}}^{\ges k_1}(t, y_1),\, \H_{\rm{TASEP}}^{\ges k_2}(t,y_2),\, \dots, \H_{\rm{TASEP}}^{\ges k_n}(t,y_n)\bigr)
\end{equation}
    coincides with the distribution of a similar vector with shifted $\iota$-th point and cutoff
\begin{equation} \label{tasep-shiftinv-vect2}
    \bigl(\H_{\rm{TASEP}}^{\ges k'_1}(t,y_1'),\, \H_{\rm{TASEP}}^{\ges k'_2}(t,y_2'),\, \dots, \H_{\rm{TASEP}}^{\ges k'_n}(t,y_n')\bigr).
\end{equation}
\end{theorem}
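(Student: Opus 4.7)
The strategy is to obtain Theorem \ref{Theorem_shift} as a degeneration of the shift-invariance theorem for the stochastic colored six-vertex model established by Borodin--Gorin--Wheeler in \cite{BGW}. That result already asserts a completely analogous invariance statement for the colored height functions of the six-vertex model under a simultaneous $(+1,+1)$ shift of a single observation point's color cutoff and spatial coordinate, under exactly the same monotonicity constraints on the cutoffs and positions. So the task is not to reprove shift-invariance from scratch, but to transfer it to the continuous-time TASEP setting through a well-understood limit.

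First I would quote the precise statement from \cite{BGW}: for the colored stochastic six-vertex model launched from the rainbow (step-type) boundary condition, and for any collection of observation data $(k_j,y_j)_{j=1}^n$ satisfying $k_1\le \cdots \le k_n$ and $y_1\ge \cdots \ge y_n$, with the same monotonicity holding for the shifted data $(k_j',y_j')$, the joint distribution of the vector of colored height functions is invariant under replacing any single pair $(k_\iota,y_\iota)$ by $(k_\iota+1,y_\iota+1)$. This is the direct six-vertex analogue of what we need.

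Next I would realize the colored TASEP on $\mathbb Z$ started from $\zeta_0(i)=i$ as a scaling limit of the colored stochastic six-vertex model. In the standard limit, one sends the horizontal step probability to $0$ while simultaneously rescaling the horizontal lattice spacing by the reciprocal of that small parameter; in this regime the colored six-vertex paths converge to the trajectories of colored TASEP particles, the rainbow boundary condition converges to the initial condition $\zeta_0(i)=i$ considered here, and the six-vertex colored height functions converge, jointly over arbitrary finite collections of space-time points, to the TASEP height functions $\H^{\ges k}_{\textnormal{TASEP}}(t,y)$ at matching coordinates. Passing the six-vertex shift-invariance equality through this joint limit immediately yields Theorem \ref{Theorem_shift}: the joint laws of \eqref{tasep-shiftinv-vect1} and \eqref{tasep-shiftinv-vect2} coincide because they are the limits of two joint laws that already coincide at the six-vertex level.

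The main technical point I expect to work through is checking that the six-vertex-to-TASEP degeneration indeed delivers joint (not merely marginal) convergence of the colored height functions at arbitrarily many observation points, and that the monotonicity hypotheses on $(k_j,y_j)$ and $(k_j',y_j')$ translate cleanly through the coordinate rescaling without any off-by-one issues in the shift direction (which is the reason for the half-integer convention $y_j\in\mathbb Z+\tfrac12$ in the statement). Both of these are essentially bookkeeping given that the exact graphical coupling underlying the colored model survives the degeneration; all the substantive probabilistic content is absorbed into the six-vertex shift-invariance of \cite{BGW}.
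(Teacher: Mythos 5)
Your proposal is correct and takes essentially the same route as the paper: the authors also deduce the theorem by degenerating the colored six-vertex shift-invariance of \cite{BGW} (setting $b_1=\eps$, $b_2=0$ and letting $\eps\to 0$), citing \cite{BCG} and \cite{Aggarwal} for the joint convergence of height functions. The only detail you omit is the harmless preliminary reduction to $k_1\ge 0$ by shifting coordinates, needed because the six-vertex result is stated on a quadrant.
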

\begin{proof}
 By shifting the coordinate system, if necessary, we can assume without loss of generality that $k_1\ge 0$. Then a version of Theorem \ref{Theorem_shift} for the colored stochastic six-vertex model was proven in \cite[Theorem 1.2]{BGW} (see also \cite[Theorem 1.5]{Ga}). The latter model is an assignment of configurations (six types of vertices) to the points of the positive quadrant $\mathbb Z_{\ge 0}\times \mathbb Z_{\ge 0}$ by a sequential stochastic rule, which can be thought of as a multiparameter discrete time asymmetric simple exclusion process. There exists a limit transition from the stochastic vertex model to TASEP, which was noticed (for the colorless model) in
 \cite[Section 2.2]{BCG} and proved in detail and greater generality in \cite{Aggarwal}. For the colored version the proof is the same.

 In the notation of \cite[Section 1.2 and Figure 4]{BGW}, we assume that the hopping probabilities $b_1$ and $b_2$ are homogeneous (do not depend on the lattice point) and set
 $$
  b_1=\eps, \quad b_2=0,
 $$
 Then the height function of the stochastic six-vertex model, which we denote by $\H^{\ges k}_{\text{6v}}(\cdot, \cdot)$, converges in distribution to that of a TASEP height function:
  \begin{equation}
  \label{eq_6v_to_TASEP}
    \H^{\ges k}_{\text{6v}}(\lfloor \eps^{-1} t\rfloor -y, \lfloor \eps^{-1} t\rfloor +y) \xrightarrow[\eps\to 0+]{d} \H^{\ges k}_{\text{TASEP}}(t,y).
  \end{equation}
  This is to be understood in the sense that the convergence in \eqref{eq_6v_to_TASEP} holds for the distributions of finite (arbitrary) collections of values of $(k,t,y)$. Given this relation, the statement of Theorem \ref{Theorem_shift} is a direct consequence of  \cite[Theorem 1.2]{BGW}.
\end{proof}
\begin{remark}
 At first, it might seem that  \cite[Theorem 1.2]{BGW} ought to imply a more general statement than the one we formulated: indeed, that theorem allowed shifts in the situation when the observation points $(x_j,y_j)$ are not restricted to a single line --- in the case of the TASEP, an analogous statement would mean accessing the heights at different values of the time parameter~$t$. However,  \cite[Theorem 1.2]{BGW} required certain ordering inequalities for the points $(x_j,y_j)$, and the only way for these inequalities to be satisfied in the limit \eqref{eq_6v_to_TASEP} is by making all times equal, as in
 \eqref{tasep-shiftinv-vect1}--\eqref{tasep-shiftinv-vect2}.
That is one reason why at this point we are unable to give a full proof of Conjecture~\ref{conj-eq-dist} --- we will only prove in the next section its particular case corresponding to all equal times in TASEP. At the same time, since the results of \cite{BCGR} support the full validity of the conjecture in its stronger form, one can wonder if Theorem~\ref{Theorem_shift} might also have as yet unknown extensions involving unequal times.
\end{remark}

We end this section by restating a particular case of Theorem \ref{Theorem_shift} in terms of the height functions $h_{\le A \to \ge B} (t)$ of \eqref{eq_A_heights}.

\begin{corollary}
\label{cor:shift}
Fix $N \in \Z_{\ge 1}$.  We have a distributional identity of $(N-1)$--dimensional vectors
\begin{multline}
\label{eq_TASEP_shift_cor}
\left( h_{\le 1 \to \ge N} (t), h_{\le 2 \to \ge N-1} (t), \dots, h_{\le N-1 \to \ge 2} (t)  \right) \\ \,{\buildrel d \over =}\, \left( h_{\le N-1 \to \ge 2N-2} (t), h_{\le N-1 \to \ge 2N-4} (t), \dots, h_{\le N-1 \to \ge 2} (t)  \right)
\end{multline}
\end{corollary}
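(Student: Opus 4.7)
The plan is to reduce the corollary to a statement involving only the height functions $\H^{\ges k}_{\textnormal{TASEP}}$ and then to iterate Theorem~\ref{Theorem_shift} one elementary shift at a time. Using identity \eqref{eq_height_modification}, the $j$-th entry ($1\le j\le N-1$) of the left-hand vector in \eqref{eq_TASEP_shift_cor}, with $A=j$ and $B=N-j+1$, rewrites as $\H_{\textnormal{TASEP}}^{\ges j+1}(t,N-j+\tfrac12)+(2j-N)$, while the $j$-th entry of the right-hand vector, with $A=N-1$ and $B=2(N-j)$, rewrites as $\H_{\textnormal{TASEP}}^{\ges N}(t,2N-2j-\tfrac12)+(2j-N)$. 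Since the deterministic constants $2j-N$ coincide on the two sides, \eqref{eq_TASEP_shift_cor} reduces to the distributional identity
\[
 \bigl(\H_{\textnormal{TASEP}}^{\ges j+1}(t,N-j+\tfrac12)\bigr)_{j=1}^{N-1} \stackrel{d}{=} \bigl(\H_{\textnormal{TASEP}}^{\ges N}(t,2N-2j-\tfrac12)\bigr)_{j=1}^{N-1}.
\]

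To establish this identity, I would interpolate between the two configurations through a sequence of single-coordinate shifts, each an instance of Theorem~\ref{Theorem_shift}. Each such shift raises both $k_\iota$ and $y_\iota$ by exactly $+1$; and at coordinate $j$ the required total increase is $N-j-1$ for the cutoff and $N-j-1$ for the position, so the counts match. I would schedule the shifts in $N-2$ rounds: in round $r$ (for $1\le r\le N-2$), shift coordinates $j=1,2,\ldots,N-r-1$ from left to right. A quick induction shows that after round $r$, coordinate $j$ has been shifted exactly $\min(r,N-j-1)$ times, so round $N-2$ yields the target configuration.

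The main point that requires care is verifying the hypothesis of Theorem~\ref{Theorem_shift} at every intermediate state, that is, the monotonicities $k_1\le\cdots\le k_n$ and $y_1\ge\cdots\ge y_n$ both immediately before and immediately after each elementary shift. Working through the bookkeeping for the scheme above, one finds that the difference between the shift counts at consecutive coordinates $j$ and $j+1$ always belongs to $\{0,1\}$; this translates into $k_{j+1}-k_j\in\{0,1\}$ and $y_j-y_{j+1}\in\{1,2\}$ at all times, so the hypothesis of Theorem~\ref{Theorem_shift} is always satisfied. This combinatorial bookkeeping is the only subtle step; granted it, chaining the elementary shifts yields the desired distributional equality and thus the corollary.
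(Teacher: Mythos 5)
Your proposal is correct, and the overall strategy coincides with the paper's: rewrite both vectors via \eqref{eq_height_modification} in terms of $\H_{\mathrm{TASEP}}$ (your computation of the positions, cutoffs, and matching additive constants $2j-N$ is right), and then interpolate between the two configurations by a chain of elementary shifts, each an instance of Theorem \ref{Theorem_shift}. Where you genuinely differ is in the \emph{scheduling} of the elementary shifts, and your schedule is the more careful one. The paper completes each coordinate in full before moving on, working from $\iota=N-2$ down to $\iota=1$ (one shift at $\iota=N-2$, then two consecutive shifts at $\iota=N-3$, and so on). Read literally, for $N\ge 5$ this passes through intermediate configurations violating the ordering hypotheses of Theorem \ref{Theorem_shift}: for $N=5$, after coordinate $2$ has received both of its shifts while coordinate $1$ is still untouched, coordinate $1$ sits at $(k,y)=(2,\tfrac92)$ and coordinate $2$ at $(5,\tfrac{11}{2})$, so the cutoff increases while the observation point also increases, and no admissible reordering of the points exists. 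Your round-robin schedule avoids exactly this: it keeps the difference of shift counts between neighboring coordinates in $\{0,1\}$, hence $k_{j+1}-k_j\in\{0,1\}$ and $y_j-y_{j+1}\in\{1,2\}$ at every intermediate stage, so each application of Theorem \ref{Theorem_shift} is legitimate. I checked the bookkeeping: during round $r$, just after coordinate $i$ has been shifted, the counts are $m_j=\min(r,N-1-j)$ for $j\le i$ and $m_j=\min(r-1,N-1-j)$ for $j>i$, and all consecutive differences indeed lie in $\{0,1\}$. So your argument buys a complete verification of the hypotheses at the cost of slightly heavier combinatorics, whereas the paper's stated order of operations skips a check that, taken at face value, actually fails.
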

\begin{proof}
 Given the identity \eqref{eq_height_modification}, this follows by repeated applications of Theorem \ref{Theorem_shift} with $n=N-1$. Indeed,  the $(N-1)$st coordinates of the vectors in \eqref{eq_TASEP_shift_cor} are the same. The $(N-2)$nd coordinate is $h_{\le N-2 \to \ge 3} (t)$ for the left-hand side and $h_{\le N-1 \to \ge 4}(t)$ for the right-hand side. Hence, we can shift one into another by Theorem \ref{Theorem_shift} with $\iota=n-1=N-2$. Next, we apply Theorem \ref{Theorem_shift} twice for $\iota=n-2=N-3$, shifting $h_{\le N-3 \to \ge 4} (t)$ into $h_{\le N-1 \to \ge 6}(t)$.
 Continuing in this way for smaller values of $\iota$, we reach \eqref{eq_TASEP_shift_cor}.
\end{proof}

\section{The oriented swap process}

In this section we prove Theorem \ref{Theorem_main}.

\subsection{Coupling of TASEPs on different spaces}
\label{Section_coupling}
We need to gather some facts from \cite{AHR} about the connection between the oriented swap process and the colored TASEP on $\mathbb Z$.

We start by defining the colored TASEP on the finite set $[N]=\{1,2,\dots,N\}$. It is defined in exactly the same way as the colored TASEP on $\mathbb Z$, but all the particles stay in $[N]$: the swaps $(0,1)$ and $(N,N+1)$ are  prohibited. Clearly, this is just a particular representation of the oriented swap process from the introduction. In particular, the system stops at the random absorbing time $t_{N\choose 2} = T_N^{\mathrm{OSP}}$.

Following \cite[Section 3]{AHR} we introduce a coupling of the colored TASEP on $\mathbb Z$ (which we continue to denote by $\zeta_t$ as in Section~\ref{sec_shift_invariance}) with its counterpart on $[N]$, which we will denote by $\zeta_t^N(x)$. The coupling proceeds as follows: in order to construct the process on $\mathbb Z$ we need clocks (Poisson processes) attached to each edge $(k,k+1)$ --- whenever the clock rings, particles at $k$ and $k+1$ attempt to swap (and succeed only if the particle at $k+1$ had a larger label). For the process on $[N]$ we are going to use exactly the same $N-1$ clocks as the $N-1$ clocks of the process on $\mathbb Z$ corresponding to the edges $(1,2)$, $(2,3)$,\dots, $(N-1,N)$.

Now consider, for fixed $k\in\Z$,
$$
 \nu^k_t(x)= \begin{cases} 1, & \zeta_t(x)\le k,\\ 0,& \zeta_t(x)>k.\end{cases}
$$
Then $\nu^k_t$ is a realization of the usual TASEP on $\mathbb Z$ (with particles given by $1$'s and jumping to the right) started from a step initial condition: at time $0$ the particles are at $(\dots,k-2,k-1,k)$.

Similarly, we can define
$$
 \nu^{k,N}_t(x)= \begin{cases} 1, & \zeta_t^N(x)\le k,\\ 0,& \zeta_t^N(x)>k,\end{cases}
$$
and observe that $\nu^{k,N}_t$ is a realization of a TASEP on $[N]$, with no particles entering from the left and particles prohibited from exiting on the right, i.e., swaps along both edges $(0,1)$ and $(N,N+1)$ are blocked. Assuming $1\le k\le N$, in $\nu^{k,N}_0$ the particles occupy positions $1,2,\dots,k$.

Note that all of the processes $\nu_t^k$ for different values of $k$ almost surely take values in the subset $\Omega$ of the space of colorless TASEP configurations $\{0,1\}^\Z$ consisting of configurations with only a finite number of $1$'s to the right of the origin. We make use of the following result.

\begin{proposition}[{\cite[Lemma 3.3]{AHR}}] Define two combinatorial operators acting on $\Omega$: the ``cut-off'' operator $R_k$ keeps only the rightmost $k$ particles in a (potentially infinite) system of particles. The ``push-back'' operator $B_n$ pushes all the particles into the ray $(-\infty,n]$, preserving their order (and moving all particles by the minimal possible distances to the left). Then we have an almost sure identity
\begin{equation}
\label{eq_pushback}
 \nu^{k,N}_t= B_N R_k \nu^k_t,
\end{equation}
holding simultaneously for all $1\le k\le N$, and all $t$.
\end{proposition}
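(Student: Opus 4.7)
The plan is to prove the identity $\nu^{k,N}_t = B_N R_k \nu^k_t$ by induction on the (almost surely discrete) set of Poisson clock rings in the coupled graphical representation. At $t=0$ the identity holds directly: $\nu^k_0$ has particles at $\{\ldots,k-1,k\}$, so $R_k$ selects $\{1,2,\ldots,k\}$, and $B_N$ acts trivially since these positions are all $\le N$; this matches $\nu^{k,N}_0$. Between clock rings both sides are constant, so the content of the proof is to verify preservation of the identity at each individual ring.

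To make the dynamics transparent, I would first recast $B_N R_k$ via particle labels. Labeling the $i$-th rightmost particle by $i$ (a labeling preserved by TASEP dynamics on either space), and writing its position at time $t$ as $p_i^{\nu^k}(t)$ or $p_i^{\nu^{k,N}}(t)$, a short induction on the recursion $p'_i = \min(p_i, p'_{i-1}-1)$ (with convention $p'_0 := N+1$) yields the closed form $B_N : p_i \mapsto \min(p_i, N-i+1)$. The identity is therefore equivalent to the particle-wise statement
\[
p_i^{\nu^{k,N}}(t) = \min\bigl(p_i^{\nu^k}(t),\, N-i+1\bigr) \qquad (i=1,\ldots,k),
\]
reducing the combinatorial content of $B_N$ to a pointwise minimum that is easy to track through each jump.

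The verification at each clock ring splits into two types. External clocks at $(j,j+1)$ with $j\le 0$ leave $R_k \nu^k$ unchanged, since the top $k$ particles of $\nu^k$ remain at positions $\ge 1$ for all $t$ (an invariant inherited from the initial condition and the monotonicity of TASEP). External clocks at $j\ge N$ may move a particle from $j$ to $j+1$ in $\nu^k$, but since $\min(j, N-i+1) = \min(j+1, N-i+1) = N-i+1$ for any $i\le k$, the right-hand side is preserved. The main obstacle is the case analysis for shared clocks $(j,j+1)$ with $1\le j\le N-1$: if particle $i$ jumps in $\nu^k$ from $j$ to $j+1$ with $j < N-i+1$, the gap condition $p_{i-1}^{\nu^k}\ge j+2$ propagates to $p_{i-1}^{\nu^{k,N}}\ge j+2$ and a parallel jump fires in $\nu^{k,N}$; if instead $j\ge N-i+1$, the push-back has already saturated particle $i$ at $N-i+1$, and one verifies (using the packed structure forced on particles $1,\ldots,i$) that position $j+1$ in $\nu^{k,N}$ is occupied, blocking the jump, while the formula is preserved trivially by $\min(j+1, N-i+1)=N-i+1$. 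The converse direction (no jump in $\nu^k$ implies no jump in $\nu^{k,N}$) reduces to the same packed-structure observation. Each individual verification is routine, but the bookkeeping across these cases is where the principal effort lies.
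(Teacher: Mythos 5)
Your argument is correct. Note that the paper you are being compared against does not actually prove this statement---it is quoted verbatim from \cite[Lemma 3.3]{AHR} and used as a black box---so the relevant comparison is with the original proof in that reference, which likewise proceeds by induction over the clock rings of the coupled graphical representation. Your reduction of $B_N$ to the closed form $p_i\mapsto\min(p_i,\,N-i+1)$ (valid because the positions $p_i$ are strictly decreasing in $i$, so the recursion $p_i'=\min(p_i,p_{i-1}'-1)$ telescopes) is a clean way to organize the bookkeeping, and the three pieces of your case analysis are exactly the ones needed: clocks at edges $(j,j+1)$ with $j\le 0$ never touch the rightmost $k$ particles of $\nu^k_t$ (they start at positions $\ge 1$ and only move right); clocks with $j\ge N$ only move particles already saturated by the minimum; and for shared clocks the gap condition $p_{i-1}^{\nu^k}\ge j+2$ transfers to the pushed-back configuration when $j<N-i+1$, while for $j\ge N-i+1$ the closed form forces particles $1,\dots,i$ of $\nu^{k,N}$ to be packed at $N,N-1,\dots,N-i+1$, so the site $j+1$ is occupied and the attempted swap is suppressed on both sides of the identity. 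The one step you state but do not verify---that blocked or absent jumps in $\nu^k$ cannot correspond to successful jumps in $\nu^{k,N}$, and vice versa when site $j$ is occupied in $\nu^{k,N}$ by a saturated particle not present at $j$ in $\nu^k$---does reduce, as you say, to the same packed-structure observation, so the sketch is complete in its essentials even though the full case enumeration is left implicit.
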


\subsection{The colored TASEP on $\Z_N$ and its height functions}

Define
$$
\hat{h}_{\le A \to \ge B} (t) :=  \# \left\{ x\in\Z \,:\, x \ge B \text{ and } \zeta_t^N(x)\le A  \right\}, \qquad 1\le A,B\le N,
$$
and note that these are the height functions associated with the colored TASEP on~$\Z_N$.

\begin{proposition} Under the coupling of colored TASEPs on different subsets of $\mathbb Z$ of Section \ref{Section_coupling}, we have an almost sure identity holding for all $t_1,\dots,t_{N-1}\ge 0$:
\begin{multline} \label{eq_1st_infinite_identity}
\left( \hat h_{\le 1 \to \ge N} (t_1), \hat h_{\le 2 \to \ge N-1} (t_2), \dots, \hat h_{\le N-1 \to \ge 2} (t_{N-1})  \right)
\\ \,=\, \left( \min \left( h_{\le 1 \to \ge N} (t_1),1 \right), \min \left( h_{\le 2 \to \ge N-1} (t_2), 2 \right), \dots, \min \left( h_{\le N-1 \to \ge 2} (t_{N-1}), N-1 \right) \right).
\end{multline}
\end{proposition}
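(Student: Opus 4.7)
The plan is to reduce the vector identity \eqref{eq_1st_infinite_identity} to a coordinate-wise, pointwise-in-time deterministic statement, and then extract it from the sample-path identity $\nu^{k,N}_t = B_N R_k \nu^k_t$ of \eqref{eq_pushback}. Since that identity holds almost surely simultaneously for all $1 \le k \le N$ and all $t \ge 0$, and since the $k$-th coordinate on either side of \eqref{eq_1st_infinite_identity} depends only on the state of the $k$-th process at the single time $t_k$, it suffices to show that whenever $\nu^{k,N}_t = B_N R_k \nu^k_t$, one has
$$
\hat h_{\le k \to \ge N+1-k}(t) \,=\, \min\bigl(h_{\le k \to \ge N+1-k}(t),\, k\bigr)
$$
for each fixed $k \in \{1,\dots,N-1\}$ and $t \ge 0$.

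To prove this scalar identity, fix such $k$ and $t$ and let $p_1 < p_2 < \cdots < p_k$ be the positions of the rightmost $k$ particles of $\nu^k_t$, i.e.\ the particles retained by $R_k$. A short induction on $j$ running from $j=k$ down to $j=1$, using that the strict inequality $p_j < p_{j+1}$ between integers forces $p_j \le p_{j+1}-1$, gives the explicit formula
$$
q_j \,=\, \min\bigl(p_j,\, N-k+j\bigr), \qquad 1 \le j \le k,
$$
for the position $q_j$ of the $j$-th particle after applying the push-back operator $B_N$; so these are precisely the particle positions in $\nu^{k,N}_t$.

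The key observation is now that the ``bounding'' term $N-k+j$ satisfies $N-k+j \ge N+1-k$ for every $j \ge 1$, so that it never pulls $q_j$ strictly below $N+1-k$. Consequently $q_j \ge N+1-k$ if and only if $p_j \ge N+1-k$, and so
$$
\hat h_{\le k \to \ge N+1-k}(t) \,=\, \#\{j : q_j \ge N+1-k\} \,=\, \#\{j : p_j \ge N+1-k\}.
$$
Writing $m := h_{\le k \to \ge N+1-k}(t)$ for the number of particles of $\nu^k_t$ at positions $\ge N+1-k$, we see that if $m \le k$ then the rightmost $k$ particles of $\nu^k_t$ contain all $m$ such particles, giving count $m$, while if $m \ge k$ then all $k$ retained particles already lie at positions $\ge N+1-k$, giving count $k$. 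In both cases the count equals $\min(m,k)$, finishing the argument.

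The argument is essentially bookkeeping; the only piece that requires care is the explicit formula for $q_j$, and even that is a routine induction. Conceptually, the content of the proposition is just that $B_N$ doesn't shove any of the rightmost $k$ particles of $\nu^k_t$ past the threshold $N+1-k$, which is geometrically clear since $B_N$ only has to reserve slots $N+1-k, \ldots, N$ (exactly $k$ of them) for these $k$ particles.
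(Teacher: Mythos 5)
Your proof is correct and follows essentially the same route as the paper's: reduce to the coordinate-wise identity via \eqref{eq_pushback}, observe that $\min(\cdot,k)$ amounts to restricting to the rightmost $k$ particles (the effect of $R_k$), and check that $B_N$ never pushes any of those $k$ particles strictly below $N+1-k$ because exactly $k$ slots $N+1-k,\dots,N$ are available. The only difference is cosmetic: you verify the last point via the explicit formula $q_j=\min(p_j,N-k+j)$, whereas the paper gives the same counting observation in one line.
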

\begin{proof}
 For an ordinary colorless TASEP $\nu_t^k$ (or $\nu_t^{k,N-1}$), let $H(x; \nu_t^k)$ (resp.\ $H(x; \nu_t^{k,N-1})$) denote the number of the particles \emph{strictly} to the right of $x$; this is a deterministic function of $\nu_t^k$ (resp.\ $\nu_t^{k,N-1}$). Then we have
 \begin{equation}
 \label{eq_x1}
   \hat h_{\le i \to \ge N+1-i} (t)= H(N-i; \nu_t^{i,N}), \quad i=1,2,\dots,N-1.
 \end{equation}
 Simultaneously, (cf.\ \cite[Equations (4) and (5)]{AHR})
 \begin{equation}
 \label{eq_x2}
  \min \left( h_{\le i \to \ge N+1-i} (t),i \right)= H(N-i, R_i \nu^i_t)= H(N-i, B_N R_i \nu^i_t), \quad i=1,2,\dots,N-1.
 \end{equation}
 For the first equality in the last formula, notice that since we deal with $\min(\cdot,i)$, we can ignore all the particles beyond the first $i$; for the second equality, notice that for $i$--particle configurations, $B_N$ does not change the number of particles to the right of $N-i$, since we have $i$ free spots to the right from $N-i$; these are $N+1-i$, $N+2-i$,\dots $N$.

 Applying \eqref{eq_x1}, \eqref{eq_x2}, and \eqref{eq_pushback} to each coordinate of the vector \eqref{eq_1st_infinite_identity}, we get the desired identity.
\end{proof}

Let $T_N^{\mathrm{OSP}}$ be the absorbing time, that is, the time when the colored TASEP on $[N]$ stops. The definition, identity \eqref{eq_1st_infinite_identity}, and Corollary \ref{cor:shift} imply that
\begin{align}
\label{eq:tauExit}
\mathrm{Prob} \big( &T_N^{\mathrm{OSP}} \le t \big) = \mathrm{Prob} \left( \hat h_{\le 1 \to \ge N} (t) =1, \hat h_{\le 2 \to \ge N-1} (t) =2, \dots, \hat h_{\le N-1 \to \ge 2} (t) =  N-1  \right)
 \\ &= \mathrm{Prob} \left( h_{\le 1 \to \ge N} (t) \ge 1, h_{\le 2 \to \ge N-1} (t) \ge 2, \dots, h_{\le N-1 \to \ge 2} (t) \ge N-1 \right)
\nonumber \\ &=\mathrm{Prob} \left( h_{\le N-1 \to \ge 2N-2} (t) \ge 1, h_{\le N-1 \to \ge 2N-4} (t) \ge 2, \dots, h_{\le N-1 \to \ge 2} (t) \ge N-1 \right).
\nonumber
\end{align}

\subsection{Proof of Theorem \ref{Theorem_main}}
\label{Section_proof}
Using the coupling from Section \ref{Section_coupling}, the event
$$ A_t^{N-1} := \left( h_{\le N-1 \to \ge 2N-2} (t) \ge 1, h_{\le N-1 \to \ge 2N-4} (t) \ge 2, \dots, h_{\le N-1 \to \ge 2} (t) \ge N-1 \right)$$
appearing on the right-hand side of \eqref{eq:tauExit}
has the following interpretation:
given a colorless TASEP started from the step initial condition with particles occupying the positions $N-1$,$N-2$,$N-3,\ldots$ at time $0$,
  $A_t^{N-1}$ is the event
  that at time $t$ the first particle is at or to the right of position $2N-2$, the second particle is at or to the right of position $2N-4$,\dots, the $(N-1)$th particle is at or to the right of position $2$.

We can now reinterpret this event in terms of Last Passage Percolation (LPP) with exponential weights, using the well-known correspondence between the TASEP and LPP with such weights. We summarize this relationship between the two processes; for a more detailed explanation, see, e.g., the discussion around Figure 4 in \cite{BG_lectures}, or \cite[Section 4.7]{romik-lis}. In short, we treat the configuration of the TASEP as a broken line interface with particles representing segments of slope $-1$ and holes (i.e., the absence of a particle at some location) representing segments of slope $1$, as in Figure \ref{Fig_LPP}. Then the time evolution of the TASEP becomes the growth of the line interface, and the growth follows the rule that each inner corner is filled with a unit square after an exponential waiting time (independent of all other waiting times). These waiting times, in turn, form the array of weights for the last passage percolation model.

 \begin{figure}[t]
\begin{center}
{\scalebox{1.2}{\includegraphics{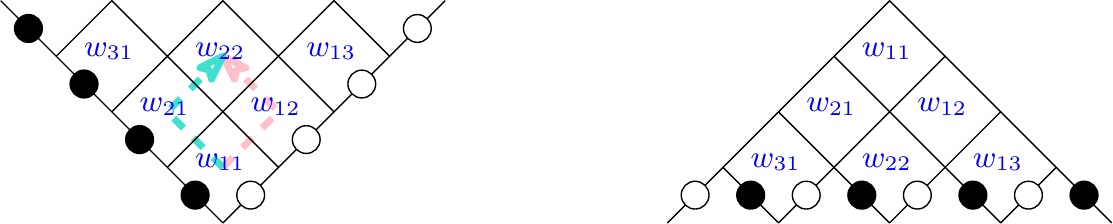}}}
 \caption{Left panel: a step initial condition in the TASEP corresponds to the wedge-type broken line interface. The shape of the interface after time $t$ can be computed using last passage times: the time when a box $(i,j)$ is added to the interface is $L(i,j)$. In particular, $L(2,2)=\max(w_{11}+w_{12}+w_{22}, w_{11}+w_{21}+w_{22})$. Right panel: by flipping the picture we can get the TASEP with a flat initial condition.  \label{Fig_LPP}}
\end{center}
\end{figure}

Making the idea more precise and applying it to our particular situation, we take a quadrant filled with i.i.d.\ exponential mean $1$ random variables $w_{ij}$, $i,j=1,2,\dots$ and draw it in Russian notation, as on the left panel of Fig.~\ref{Fig_LPP}. For $i,j\ge1$, we define the last passage time $L(i,j)$ associated with the square with coordinates $(i,j)$ by
$$
 L(i,j)=\max_{(1,1)=b[1]\to b[2]\to \dots \to b[m]=(i,j)}  \sum_{k=1}^m w_{b[k]},
$$
where $m=i+j-1$ and the maximum is taken over all monotone lattice paths joining $(1,1)$ with $(i,j)$ (i.e., paths with $b[k+1]-b[k]\in\{(0,1),(1,0)\}$ for all~$k$). The last passage time $L(i,j)$ represents the time when the unit square with coordinates $(i,j)$ was filled; in the TASEP picture (with our particular step initial condition offset by $N-1$ units from the usual one), this corresponds to the time it took the particle that started out in position $N-i$ to arrive at position $N-i+j$.

Now, the vector $\mathbf{V}_N$ appearing in \eqref{eq-dist-rand-vects} was defined in \cite{BCGR} in terms of the last passage percolation times as
\begin{equation}
\label{eq-def-vn}
\mathbf{V}_N = (L(1,N-1), \, L(2,N-2),\, \dots,\, L(N-1,1)).
\end{equation}
Moreover, with the correspondence described above, we now see that the event
$A_t^{N-1}$
is the same as the event
$$
 \bigl( L(1,N-1)\le t, \, L(2,N-2)\le t,\, \dots,\, L(N-1,1)\le t \bigr).
$$
Hence, we conclude that  \eqref{eq:tauExit} implies the equality in law
\begin{equation}
\label{eq_abs_as_LPP}
 T_N^{\mathrm{OSP}}  \stackrel{d}{=}\max\bigl( L(1,N-1), \, L(2,N-2),\, \dots,\, L(N-1,1)\bigr)
= \max_{1\le k\le N-1} V_N(k) =  T_N^{\mathrm{OSP}},
\end{equation}
proving \eqref{eq-dist-rand-vars}.

Finally, the asymptotics of $T_N^{\mathrm{LPP}}$ were established in \cite[Theorem 1.1]{BZ} (one needs to replace $2N+1$ by $N$ and take $\gamma=1/2$ there), which, in view of \eqref{eq-dist-rand-vars}, gives precisely \eqref{eq_abs_limit}.
\qed

\begin{remark}
An alternative way to derive the asymptotics of \eqref{eq_abs_as_LPP} is by flipping the picture vertically and computing instead the maximum
$$
\max\bigl( L( (2,N)\to (N,N)), \, L( (3,N-1)\to (N,N)),\, \dots,\, L((N,2)\to (N,N))\bigr),
$$
where $L((i,j)\to(i',j'))$ now denotes a more general last passage percolation time
$$
 L((i,j)\to(i',j'))=\max_{(i,j)=b[1]\to b[2]\to \dots \to b[m]=(i',j')}  \sum_{k=1}^m w_{b[k]},
 $$
with $1\le i\le i'$, $1\le j\le j'$, and the maximum being taken over all monotone lattice paths joining $(i,j)$ to $(i',j')$.
Using the correspondence between TASEP and last passage percolation again, one can then identify the latter maximum with the first time the height at $0$ for the TASEP started from a so-called \emph{flat} initial condition reaches the value $N$; see the right panel of Figure~\ref{Fig_LPP}. From this perspective, the asymptotic computation leading to the Tracy-Widom distribution $F_1$ goes back to \cite{S}, \cite{BFPS}. In a wider context, the first appearance of the $F_1$ distribution in a closely related framework dates to \cite{BR}.
\end{remark}

\bigskip
\noindent
Alexey Bufetov (\texttt{alexey.bufetov@gmail.com})
\\
Hausdorff Center for Mathematics \& Institute for Applied Mathematics, University of Bonn

\bigskip
\noindent
Vadim Gorin (\texttt{vadicgor@gmail.com})
\\
University of Wisconsin - Madison, USA; Massachusetts Institute of Technology, USA; Institute for Information Transmission Problems, Russia

\bigskip
\noindent
Dan Romik (corresponding author, \texttt{romik@math.ucdavis.edu})
\\
University of California, Davis

\end{document}